\documentclass[10pt]{amsart}
\usepackage[T1]{fontenc}
\usepackage[utf8]{inputenc}
\usepackage[english]{babel}
\usepackage{lmodern}
\usepackage{amsmath}
\usepackage{amssymb}
\usepackage{amsfonts}
\usepackage{amsxtra}
\usepackage{enumerate}
\usepackage{verbatim}
\usepackage{color}

\usepackage{graphicx}
\pagestyle{headings}
\usepackage[mathscr]{eucal}
\usepackage[pdftex,colorlinks,urlcolor=blue,pdfstartview=FitH,
]{hyperref}

\newcommand{\N}{\mathbb N}

\newcommand{\R}{\mathbb R}

\newcommand{\mR}{\mathcal{R}}

\newtheorem{thm}{Theorem}[section]
\newtheorem{definition}[thm]{Definition}
\newtheorem{lemma}[thm]{Lemma}
\newtheorem{remark}[thm]{Remark}

\newtheorem{prop}[thm]{Proposition}

\DeclareMathOperator*{\supp}{supp}

\let\phi=\varphi
\let\e=\varepsilon

\usepackage{graphicx}
\begin{document}

\title[Existence of complete Lyapunov functions]{Existence of complete Lyapunov functions with prescribed orbital derivative}

\author{Peter Giesl}
\address{Department of Mathematics, University of Sussex, Falmer, Brighton, BN1 9QH, United Kingdom}
\email{p.a.giesl@sussex.ac.uk}

\author{Sigurdur Hafstein}
\address{Faculty of Physical Sciences, Dunhagi 5, 107 Reykjav\'ik, Iceland}
\email{shafstein@hi.is}

\author{Stefan Suhr}
\address{Fakult\"at f\"ur Mathematik, Ruhr-Universit\"at Bochum, Universit\"ats\-stra\ss e 150, 44780 Bochum, Germany}
\email{stefan.suhr@ruhr-uni-bochum.de}
\thanks{Suhr is partially supported by the SFB/TRR 191 ``Symplectic Structures in Geometry, Algebra and Dynamics'', funded by the Deutsche Forschungsgemeinschaft.}

\subjclass[2010]{Primary 34D05  93D30  37C10 }

\date{}

\dedicatory{}

\begin{abstract}Complete Lyapunov functions for a dynamical system, given by an autonomous ordinary differential equation, are scalar-valued functions that are strictly decreasing along orbits outside the chain-recurrent set. In this paper we show that we can prescribe the (negative) values of the derivative along orbits in any compact set, which is contained in the complement of the chain-recurrent set. 
Further, the complete Lyapunov function is as smooth as the vector field defining the dynamics.
This delivers a theoretical foundation for numerical methods to construct complete Lyapunov functions and renders them accessible for further theoretical analysis and development.
\end{abstract}

\maketitle

\section{Introduction}

Initial value problems of autonomous differential equations arise in many applications and define a dynamical system. Many tools have been developed to study the long-term behaviour of solutions and classify different behaviour depending on the initial conditions. One of the classical and fundamental tools is a Lyapunov function, which is a generalization of the energy in a dissipative system. It is a scalar-valued function, which is non-increasing along orbits of the dynamical system.
Complete Lyapunov functions, introduced by \cite{Auslander1964,Conley1978}, are scalar-valued functions, which are strictly decreasing along orbits outside the chain-recurrent set and satisfy additional properties for the values on the chain-recurrent set.

A complete Lyapunov function
 describes the qualitative behaviour of orbits by separating the phase space into two disjoint areas with fundamentally different behaviour of the flow: the  chain-recurrent set and its complement, where the flow is gradient-like. On the chain-recurrent set the flow is (almost) recurrent, it contains all equilibria, periodic and almost periodic orbits, as well as local attractors and repellers. The flow on the chain-recurrent set is sensitive to infinitesimal perturbations, while the gradient-like flow is robust to infinitesimal perturbations.
 Moreover, complete Lyapunov functions reveal stability properties of the chain transitive components of the chain recurrent set as well as the flow between them.

  If the complete Lyapunov function is differentiable,
  then the conditions can be expressed by the derivative along solutions, the orbital derivative: points with vanishing orbital derivative characterize the chain-recurrent set, while the orbital derivative is strictly negative in the area of gradient-like flow.

 The existence of  complete Lyapunov functions was first shown on  compact phase spaces
 \cite{Conley1978} and later on noncompact phase spaces  \cite{Hurley1991,hurley95,hurley1998,complete2011patrao}. The existence of $C^\infty$ complete Lyapunov functions on compact state spaces was shown in \cite{FaPa2019smoothingLya}, and in noncompact spaces in \cite{SuHa2020complya}. The latter proof used the connection of complete Lyapunov functions to  time functions in general relativity \cite{Hawking};  this relation was first noted by \cite{Fathi2012} and further explored in \cite{BeSu2018smmothConeLya}, which gave the first general existence results for $C^\infty$  Lyapunov functions
 on arbitrary  manifolds.

 The main condition on complete Lyapunov functions is that the orbital derivative is strictly negative in the gradient-flow part, i.e.~the complement of the chain-recurrent set. Hence, complete Lyapunov functions are  not unique and  a natural question is whether one can prescribe the values of the orbital derivative by a given negative function on the gradient-flow part.

 The main result of the paper is that, indeed, the orbital derivative can be prescribed by an arbitrary, sufficiently smooth function on any compact set, which is contained in the complement of the chain-recurrent set, see Theorem \ref{T1}. In the proof we first show that we can reduce the problem to the case where the orbital derivative is fixed to $-1$ and then we modify an existing $C^\infty$ complete Lyapunov function on the compact set, while preserving it away from it; this is achieved by modifying it on flow boxes.
The resulting complete Lyapunov function is as smooth as the vector field defining the system.

 This result has implications on the numerical construction of complete Lyapunov functions. There exist a number of numerical approaches to compute complete Lyapunov functions. One approach  divides the phase space into cells and computes the flow between these cells to construct a complete Lyapunov function \cite{Ban2006}.
Other approaches, however, fix the orbital derivative by a prescribed function and use collocation methods to solve the resulting partial differential equation for the complete Lyapunov function \cite{paper1,completeAGH2019} -- or optimization methods with a mixture of equality and inequality constraints \cite{min-quad}.  So far no existence result for these approaches using equations was available. The results of this paper can be used to ensure that numerical methods for constructing complete Lyapunov functions with prescribed orbital derivative are successful, and thus they deliver a theoretical foundation for these methods.

Let us give an overview of the paper: In Section \ref{def} we define complete Lyapunov functions and state the main result. In Section \ref{proof} we prove the results, before we conclude the paper in Section \ref{sec:conclusions}.

\section{Definition \& Main Result}
\label{def}
Let $U\subset \R^n$ be open and let $X\colon U\to\R^n$ be $C^l$ with $l\in\N\cup\{\infty\}$, where $\N:=\{1,2,3,4,5,\ldots\}$.
We consider the dynamical system defined by  solutions of the  ODE $\dot{x}=X(x)$.

\begin{definition}
The {\bf local flow} of $X$ is the map $\Phi\colon \Omega\to U$, $(t,p)\mapsto \Phi_t(p)$ such that
\begin{itemize}
\item[(i)] $\Omega\subset \R\times U$ is open with $\{0\}\times U\subset \Omega$.
\item[(ii)] For every $p\in U$ the orbit $t\mapsto \Phi_t(p)$ is the unique maximally extended solution to the initial value problem
\[
\begin{cases}
\frac{\partial}{\partial t}\Phi_t(p)&=X(\Phi_t(p)) \\
\Phi_0(p)&=p.
\end{cases}
\]
\end{itemize}
\end{definition}

\begin{remark}
\begin{itemize}
\item[(1)] The attribute ``local'' for the flow refers to local in time. We do not assume  that flowlines of $X$ exist on the whole of $\R$ for all initial values $p\in U$.
\item[(2)] With the regularity assumption on $X$ the existence of $\Phi$ is implied by the Theorem of Picard-Lindel\"off. Note that the local flow
enjoys the same regularity as the generator $X$, i.e. $\Phi\in C^l$.
\end{itemize}
\end{remark}

Let us now define the chain recurrent set. We denote by $\|.\|$ the Euclidian norm on $\R^n$.

\begin{definition}
Let $T>0$ and $\e \colon U\to (0,\infty)$ be continuous.
A finite collection of points $p_0,\ldots,p_m\in U$ $(m\ge 1)$ is an ${(\varepsilon,T)}${\bf-chain} if there exist $t_i\ge T$ with
\[
\|\Phi_{t_i}(p_i)-p_{i+1}\|\le \e(\Phi_{t_i}(p_i))
\]
for all $0\le i\le m-1$.
\end{definition}

\begin{definition}
A point $p\in U$ is {\bf chain recurrent for} $\mathbf{X}$ if  for all $T>0$ and all continuous $\e \colon U\to (0,\infty)$  there exists an $(\e,T)$-chain
$p_0=p,p_1,\ldots, p_m=p$.
\end{definition}

Denote by
$$\mathcal{R}_{X}$$
the set of chain recurrent points for $X$.

Recall that the {\bf chain transitive components} of the chain recurrent set are the equivalence classes with respect to the equivalence relation $\sim$, where
$p\sim q$ for two points $p,q\in \mR_X$ if there exists $T>0$ such that for all continuous $\e\colon U\to (0,\infty)$ there is an $(\e,T)$-chain
$p=p_0,p_1,\ldots,p_m=p$ containing $q$.

The following definition of Lyapunov functions is very closely related to \cite[Definition 1.4]{BeSu2018smmothConeLya}. Here we omit the smoothness
of the functions in favor of a lower regularity and consider only the case of vector fields. Recall that a point $p\in U$ is regular for a differentiable function
$f\colon U\to \R$ if $\nabla f(p)\neq 0$.

\begin{definition}
\label{coneLyaXYX}
Let $X\colon U\to\R^n$ be $C^l$ with $l\in\N\cup\{\infty\}$. The  function $\tau:U\to \R$ is a {\bf Lyapunov function for $\mathbf{X}$} if it is $C^l$ regular,
\[\dot{\tau}(p):=
\nabla\tau(p)\cdot X(p)\leq 0
\]
for each $p\in U$, and if, at each regular point $p$ of $\tau$, we have
$\dot{\tau}(p)<0$.
\end{definition}

\begin{remark}
This definition of a Lyapunov function is not the usual one, but particulary useful when studying numerical methods for the computation of Lyapunov functions; c.f.~\cite{opti} where similar Lyapunov functions are referred to as
\it{complete Lyapunov function candidates}.  Note that a classical $C^1$ Lyapunov function for one attractor is also a Lyapunov function in the sense above and a Lyapunov function as above, with the additional assumption
 that it is constant on the attractor where it attains its strict minimum, is a classical weak, i.e.~non-strict, Lyapunov function.
\end{remark}

In order to state the theorem we adopt the notion of complete Lyapunov function from \cite[II.\S6.4]{Conley1978}, see also
\cite[Definition 4.5]{SuHa2020complya}:

\begin{definition}
\label{comLyaDef}
A Lyapunov function $\tau\colon U\to \R$ for the vector field $X$ is {\bf complete} if it is strictly decreasing
along orbits outside of $\mR_X$  and such that (1) $\tau(\mR_X)$ is nowhere dense and (2) for $t\in \tau(\mR_X)$ the set
$\tau^{-1}(t)\cap \mR_X$ is a chain transitive component.
\end{definition}

\begin{remark}
The original definition in \cite[II.\S6.4]{Conley1978} of a complete Lyapunov function requires for $t\in \tau(\mR_X)$ the preimage $\tau^{-1}(t)$ to be a chain transitive component. In general we
cannot expect the critical levels of $\tau$ to be equal to chain transitive components. As an example consider $U=\R^2$ and a vector field
$X=\chi\cdot e_1$ with $\chi\ge 0$ and $\chi(x,y)=0$ iff $(x,y)=(0,0)$. It is obvious that the chain recurrent set $\mR_X$ consists only of the origin $(0,0)$
although the critical level $\{\tau=\tau((0,0))\}$ of any complete Lyapunov function $\tau\colon \R^2\to \R$ is strictly larger than $\{(0,0)\}$.
To see this note that $\tau(x,0)>\tau(0,0)$ for $x<0$ and $\tau(x,0)<\tau(0,0)$ for $x>0$, because $\tau$ is continuous and strictly decreasing along solution trajectories.  Thus
the critical level $\{\tau=\tau((0,0))\}$
divides the plane into at least two connected components. Since a single point does not divide the plane we arrive at the conclusion that the critical
level is strictly larger than $\{(0,0)\}$.
\end{remark}

\begin{remark}
Our definition of a complete Lyapunov function is stricter than that of Conley: $\tau$ is $C^1$, whereas Conley's function is merely continuous, and in our definition $p\in\mR_X$ implies $\nabla\tau(p)=0$, which is not necessarily the case in Conley's work, even for a differentiable $\tau$.  The advantage of this stricter definition is that the decrease condition can be written $\nabla \tau(p)\cdot X(p) <0$ for every $p\in U\setminus \mR_X$,
 which is much more accessible for numerical methods.  Note that a complete Lyapunov function from Definition \ref{comLyaDef} is also a complete Lyapunov function in the sense of Conley \cite{Conley1978}
and it was proved in \cite{SuHa2020complya} that such a complete Lyapunov function always exists, i.e.~our definition is not more restrictive.
\end{remark}

Now we are ready to state our main result:

\begin{thm}\label{T1}
Let $U\subset \R^n$ be open and let $X\colon U\to\R^n$ be $C^l$ with $l\in \N\cup\{\infty\}$. Then for every compact set $K\subset U\setminus \mathcal{R}_X$
and every $C^l$-function $g\colon U_K\to (-\infty,0)$
defined on a neighborhood $U_K$ of $K$ there exists a complete $C^l$-Lyapunov
function
\[
\tau_K\colon U\to \R
\]
with $\dot{\tau}_K|_K\equiv g$ and $\dot\tau_K<0$ on $U\setminus \mathcal{R}_X$.
\end{thm}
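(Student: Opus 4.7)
The plan follows the two-step strategy outlined in the introduction: first reduce to the case $g \equiv -1$ by time-reparametrizing $X$, then modify an existing $C^l$ complete Lyapunov function on flow boxes covering $K$.

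\emph{Reduction to $g \equiv -1$.} Let $h \colon U \to (0, \infty)$ be a $C^l$ function, bounded above and below by positive constants, with $h \equiv -1/g$ on a neighborhood of $K$ (obtained by gluing $-1/g$ on a relatively compact sub-neighborhood of $U_K$ to a positive constant outside, via a partition of unity). Set $\tilde X := h X$. Multiplication by such a bounded positive $C^l$ function is a smooth time-reparametrization, so $\mR_{\tilde X} = \mR_X$ and a $C^l$ function is a complete Lyapunov function for $\tilde X$ if and only if it is for $X$. Since $\nabla\tau \cdot \tilde X = h \cdot \nabla\tau \cdot X$, a complete Lyapunov function $\tau$ for $\tilde X$ with $\nabla\tau \cdot \tilde X|_K \equiv -1$ yields $\nabla\tau \cdot X|_K = -1/h|_K = g$. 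It therefore suffices to treat $\tilde X$ in the case $g \equiv -1$.

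\emph{Starting function with adjustable slope.} By \cite{SuHa2020complya} take a $C^l$ complete Lyapunov function $\sigma_0$ for $\tilde X$. Since $\mR_X$ is closed and disjoint from the compact set $K$, choose an open neighborhood $N$ of $K$ with $\overline N$ compact in $U \setminus \mR_X$ and fix $c>0$ with $\dot\sigma_0 \leq -c$ on $\overline N$. Replace $\sigma_0$ by $\sigma := A\sigma_0$ with $A>0$ large: $\sigma$ is still a complete Lyapunov function for $\tilde X$, and $\dot\sigma \leq -\Lambda := -Ac$ on $\overline N$ with $\Lambda$ arbitrarily large. This quantitative rescaling is the essential input for the local modification.

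\emph{Local modification on a flow box.} Around each $p \in K$ choose a flow box $B_p = (-\epsilon, \epsilon) \times V_p \subset N$ in which $\tilde X$ is the coordinate vector field $\partial/\partial t$, so the orbital derivative is just $\partial/\partial t$. Writing $\tau_p(t, y) := \sigma(-\epsilon, y) + \int_{-\epsilon}^t v(s, y)\,ds$, the task reduces to constructing a $C^l$ function $v$ on $B_p$ with $v \leq -1$ throughout, $v \equiv -1$ on a thickening of $K \cap B_p$, $v \equiv \partial\sigma/\partial t$ near $\partial B_p$, and satisfying $\int_{-\epsilon}^\epsilon v(s, y)\,ds = \sigma(\epsilon, y) - \sigma(-\epsilon, y)$ for every $y \in V_p$ (so that $\tau_p$ matches $\sigma$ in $C^l$ sense on $\partial B_p$). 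The drop $D(y) := \sigma(-\epsilon, y) - \sigma(\epsilon, y) \geq 2\epsilon\Lambda$ strictly dominates the $K$-time $|I(y)| \leq 2\epsilon$ along each flow line with margin $(\Lambda-1)\cdot 2\epsilon$, so the integral condition is solvable: prescribe $v$ as above, smoothly interpolate, and add a $C^l$ bump $\alpha(y)\rho(t)$ supported in the slack region whose amplitude $\alpha(y)$ is chosen pointwise (hence $C^l$) to enforce the integral condition.

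\emph{Globalization and the main obstacle.} Cover $K$ by finitely many such flow boxes $B_1, \ldots, B_m$ and iterate: set $\tau_0 := \sigma$ and apply the local construction to $\tau_{i-1}$ on $B_i$ to produce $\tau_i$. Then $\tau_K := \tau_m$ agrees with $\sigma$ off $\bigcup B_i \subset N$ (inheriting the complete-Lyapunov property on $\mR_X$), satisfies $\dot\tau_K < 0$ on $U \setminus \mR_X$ by construction, and $\dot\tau_K|_K \equiv -1$. The main technical obstacle is \emph{compatibility across boxes}: each local modification weakens $|\dot\tau|$ toward $1$ inside its box, so the drop of $\tau_{i-1}$ across a later $B_i$ may shrink and the strict slack inequality $D(y) > |I(y)|$ could degenerate. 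This is handled by arranging $\dot\tau_i \leq -1$ throughout every modified box, choosing each $\epsilon_i$ so that flow lines through $B_i$ spend some time outside the $K$-thickening (where $|\dot\tau|>1$ strictly), and picking $\Lambda$ large in advance relative to the finite cover so that a strict positive slack survives at every stage of the induction, which closes the argument.
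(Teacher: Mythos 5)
Your overall strategy is the same as the paper's (reduce to $g\equiv-1$ via the reparametrization $-X/g$, rescale an existing complete Lyapunov function so that it drops fast across flow boxes, modify box by box over a finite cover of $K$), and the reduction and rescaling steps are fine. The gap is in the part you relegate to the final paragraph: the compatibility of successive box modifications. The issue is not (only) that the drop $D(y)$ may shrink. The real problem is that when you modify $\tau_{i-1}$ on a later box $B_i$, you must \emph{preserve the exact value} $\partial_t\tau=-1$ on the portions of the earlier boxes' cores that meet $B_i$, and these portions can sit anywhere in $B_i$ --- in particular inside your ``slack region'', where you place the correcting bump $\alpha(y)\rho(t)$, and near $\partial B_i$, where you force $v\equiv\partial_t\tau_{i-1}$. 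At such points your construction replaces the already-achieved slope $-1$ by something else, and a point $p\in K$ whose last containing box is $B_i$ but which lies outside the thickening of $K\cap B_i$ used at stage $i$ (this happens as soon as the thickening must stay away from $\partial B_i$ to be consistent with the boundary matching, while $K\cap B_i$ need not) ends up with $\dot\tau_K(p)\neq -1$. This is exactly the difficulty the paper isolates as the set $M$ in Lemma \ref{L1}, and resolving it occupies the entire second step of that proof: on $M$ the function is affine with slope $-1$ along flow lines, so its level sets through points of $M$ are graphs $\{(\phi_p(q),q)\}$, and one continues $\tau$ affinely past these level sets (the functions $\sigma_p$) before blending with a partition of unity in the transverse variable only. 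Your proposal contains no mechanism of this kind, and ``picking $\Lambda$ large in advance'' does not supply one, since the problem is about preserving an exact value of the derivative, not an inequality.

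A second, related defect is the solvability of your integral constraint $\int_{-\epsilon}^{\epsilon}v(s,y)\,ds=\sigma(\epsilon,y)-\sigma(-\epsilon,y)$. For $y$ such that the segment $[-\epsilon,\epsilon]\times\{y\}$ is (almost) entirely covered by the region where $v\equiv-1$ is forced, the left side is (close to) $-2\epsilon$ while the right side is $\leq -2\epsilon\Lambda$, so no admissible bump exists; moreover the slack region is not a product set, so a single profile $\rho(t)$ cannot be supported in it for all $y$ simultaneously. The paper avoids both problems by using two nested boxes of very different lengths in the flow direction (the core $\Phi((-1,1)\times V_{s})$ inside $\Phi((-(k+1),k+1)\times W_{s})$) together with the quantitative hypothesis $\dot\tau<-k-3$ on the long box: this guarantees that the function continued affinely with slope $-1$ from $t=-1$ is still strictly above $\tau$ at $t=k+1$, so the return to $\tau$ can be done by a convex interpolation in $t$ whose derivative stays negative, with no integral constraint to solve. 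You would need to rebuild your local lemma along these lines (separate short core and long outer box, interpolation rather than integral matching, and explicit preservation of the slope $-1$ on the set $M$ of previously treated cores) for the induction to close.
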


\begin{remark}\label{R1}
\begin{itemize}
\item[(a)] In the proof we will w.l.o.g.~assume that the local flow is complete.
Note that for every continuous function $f\colon U\to (0,\infty)$ the chain recurrent sets of $X$ and $f X$ coincide.

Further we can choose a
$C^\infty$-function $f\colon U\to \R$ with $f|_K\equiv 1$ such  that the local flow $\Psi$ of $f X$ is complete, i.e.
$$\Psi\colon \R\times U\to U,\quad (t,p)\mapsto \Psi_t(p)$$
is well defined. Note that $\Psi$ coincides with the local flow of $X$ on $K$. Further the set $\mathcal{R}_{X}=\mathcal{R}_{f X}$ is
$\Psi$-invariant. Thus proving Theorem \ref{T1} for $f X$ instead of $X$ yields the claim for the initial vector field as well. We will continue to use the
notation $X$ for the vector field under consideration.

\item[(b)]
Note that the regularity of $\tau_K$ is i.g.~optimal. As an example consider a vector field $X\colon \R^2\to \R^2$ of the form $X(x,y)=\chi(y)e_1$ for
some $C^l$-function $\chi\colon \R\to (0,\infty)$ which is nowhere $C^{l+1}$; e.g.~the $l$th derivative might be the Weierstrass function.
 Let $K:=[0,1]\times [0,1]\subset \R^2$ and $g\equiv -1$. By  Theorem
\ref{T1} we have a complete
$C^l$-Lyapunov function $\tau_K\colon \R^2\to \R$ with $\dot{\tau}_K|_K\equiv -1$. The flow of $X$ is given by $\Phi_t(x,y)=(x+t\chi(y),y)$ and
\begin{equation}
\label{OPTREG}
\tau_K(x+t\chi(y),y)-\tau_K(x,y)=-t
\end{equation}
as long as $\{(x+s\chi(y),y)\,|\; s\in[0,t]\}\subset K$. Assume that $\tau_K$ is $C^{l+1}$ on an open set $V\subset K$. Choose $0<t_0$, $(x_0,y_0)$ and $\delta>0$ such
that
\[
(x+t\chi(y),y)\in V
\]
for all $t\in [0,t_0]$ and all $|x-x_0| < \delta$ and $|y-y_0| < \delta$. Since $\tau_K$ is $C^{l+1}$ by assumption and for all $t$ and $(x,y)\in V$ in question, we have by \eqref{OPTREG}
for all small enough $h>0$ that
$$
\frac{\tau_K(x+h,y)-\tau_K(x,y)}{h}=\frac{-1}{\chi(y)},
$$
in particular
$$
\partial_1\tau_K(x,y)=\lim_{h\to 0+}\frac{\tau_K(x+h,y)-\tau_K(x,y)}{h}=\frac{-1}{\chi(y)} \neq 0.
$$
Further,
 the level sets $\{\tau_K=\tau_K(x_0,y_0)\}$ and $\{\tau_K=\tau_K(x_0+t_0\chi(y_0),y_0)\}$ are
graphs
\[
\{(\phi_i(y),y)|\; y\in I\},\quad i=0,1
\]
of two $C^{l+1}$ functions $\phi_0\colon I\to \R$ and $\phi_1\colon I\to\R$ respectively, where $I\subset \R$ is a sufficiently small interval around $y_0$.
 Hence
 $$
 \frac{-t_0}{\phi_1(y)-\phi_0(y)}=\frac{\tau_K(\phi_1(y),y)-\tau_K(\phi_0(y),y)}{\phi_1(y)-\phi_0(y)}=\frac{-1}{\chi(y)},
 $$
 i.e.~$\phi_1(y)-\phi_0(y)=t_0\chi(y)$ for all $y\in I$, from which $\chi$ in $C^{l+1}$ on $I$follows, a contradiction to the assumption.
\end{itemize}
\end{remark}

\section{Proof of Theorem \ref{T1}}
\label{proof}

The proof consists of modifying a sufficiently fast descending Lyapunov function on $K$ while preserving it away from $K$. This is accomplished in
Proposition \ref{P1}. The proposition in turn relies on the main technical Lemma \ref{L1} which gives the construction on a single flow box (see
definition below). The proof of the proposition is then a repeated application of the lemma.

By \cite{SuHa2020complya} we know that $X$ admits a complete $C^\infty$-Lyapunov function, $\tau'\colon U\to \R$.
We define {\bf flow boxes} as follows. For $r\in \R$ let $V_{\tau',r}$ be a precompact relatively open subset of $ \{\tau'=r\}\setminus \mathcal{R}_X$, i.e.
$V_{\tau',r}$ is open in $\{\tau'=r\}\setminus \mR_X$ and the closure $\overline{V_{\tau',r}}$ is a compact subset of $U\setminus \mathcal{R}_X$. The map
$$\Phi\colon \R\times V_{\tau',r}\to U\setminus \mR_X,\quad (t,q)\mapsto \Phi_t(q)$$
is a diffeomorphism onto its image. For $T>0$ the set
$$\mathcal{V}_{\tau',r,T}:=\Phi((-T,T)\times V_{\tau',r})\subset U\setminus \mathcal{R}_X$$
is called a {\it flow box}, cf.~Figure \ref{flowbox}.
\begin{figure}[ht]
	\centering
 \includegraphics[width=0.7\textwidth]{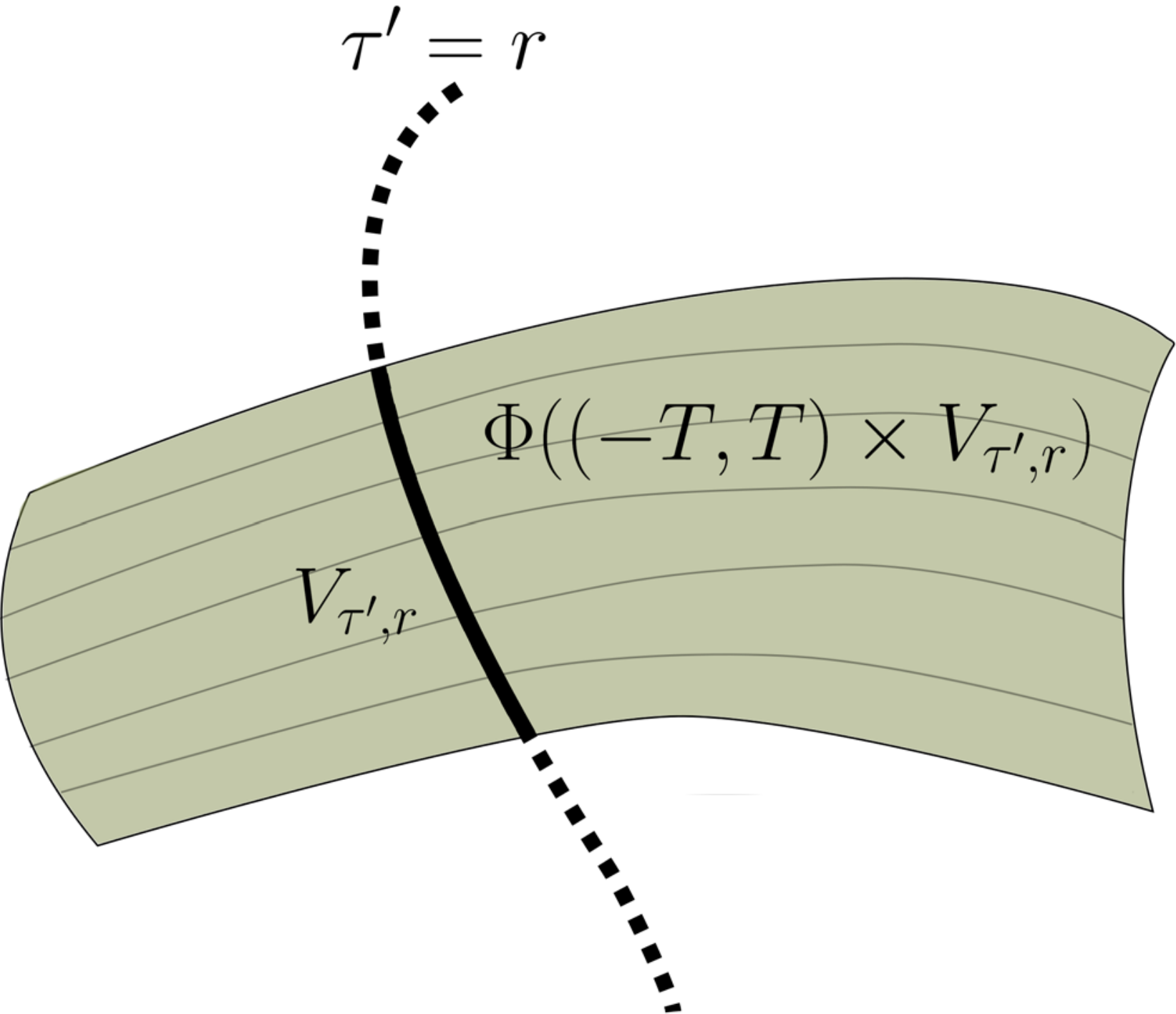}
	\caption{Schematic figure of a flow box $\mathcal{V}_{\tau',r,T}$.}
	\label{flowbox}
\end{figure}

Choose $r_1>\ldots >r_N\in \R$ and $V_{\tau',r_i}\subset \{\tau'=r_i\}\setminus \mathcal{R}_X$ such that the flow boxes
$$\mathcal{V}_{\tau',r_i,1}:=\Phi((-1,1)\times V_{\tau',r_i})$$
form an open cover of $K$, i.e.
\[
K\subset \bigcup_{i=1}^N \mathcal{V}_{\tau',r_i,1}.
\]
Choose  precompact relatively open subsets $\overline{V_{\tau',r_i}}\subset W_{\tau',r_i}\subset \{\tau'=r_i\}$.
Then the flow boxes satisfy
$$\overline{\mathcal{V}_{\tau',r_i,1}}\subset \mathcal{W}_{\tau',r_i,i+1}=\Phi((-(i+1),i+1)\times W_{\tau',r_i}).$$
Choose a constant $0<C<\infty$ such that $C\dot{\tau}'<-N-3$ on $\bigcup_{i=1}^N \mathcal{W}_{\tau',r_i,i+1}$.
Set $\tau:=C\tau'$, $s_i:= Cr_i$, $V_{s_i}:=V_{\tau',r_i}$, $W_{s_i}:=W_{\tau',r_i}$, $\mathcal{V}_{s_i,1}:=\mathcal{V}_{\tau',r_i,1}$,
and $\mathcal{W}_{s_i,i+1}:=\mathcal{W}_{\tau',r_i,i+1}$.
We then have
\begin{equation}\label{E2}
\dot{\tau}(p)<-N-3\text{ for all }p\in \bigcup_{i=1}^N \mathcal{W}_{s_i,i+1}.
\end{equation}

We will deduce Theorem \ref{T1} from the following modification result.

\begin{prop}\label{P1}
Let $X\colon U\to \R^n$ be $C^l$ with $l\in\N\cup\{\infty\}$ and let $K\subset U\setminus \mR_X$ compact be given. If \eqref{E2} holds, then there exists a complete
$C^l$-Lyapunov function $\overline{\tau}_K\colon U\to \R$ such that
\begin{itemize}
\item[(i)] $\overline{\tau}_K$ and $\tau$ coincide on $U\setminus \bigcup_i \mathcal{W}_{s_i,i+1}$,
\item[(ii)] $\nabla\overline{\tau}_K\cdot X\equiv -1$ on a neighborhood of $K$, and
\item[(iii)] the critical set of $\overline{\tau_K}$ is equal to $\mR_X$, i.e. $\{p\in U|\; \nabla \tau_K=0\}=\mR_X$.
\end{itemize}
\end{prop}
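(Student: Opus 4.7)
The plan is to iteratively apply Lemma \ref{L1}, one flow box at a time, modifying $\tau$ on the boxes $\mathcal{W}_{s_i,i+1}$ in order $i=1,\ldots, N$. I would start from $\tau_0 := \tau$, and at stage $i$ apply Lemma \ref{L1} to $\tau_{i-1}$ and the flow box $\mathcal{W}_{s_i,i+1}$ to obtain $\tau_i$ with the outcome that $\tau_i \equiv \tau_{i-1}$ off $\mathcal{W}_{s_i,i+1}$ and $\dot{\tau}_i \equiv -1$ on an open neighbourhood of $K\cap \overline{\mathcal{V}_{s_i,1}}$. Then set $\overline{\tau}_K := \tau_N$. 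Conclusion (i) is immediate because the supports of the successive modifications are contained in $\bigcup_i \mathcal{W}_{s_i,i+1}$; conclusion (ii) follows because the boxes $\mathcal{V}_{s_i,1}$ cover $K$, so the union of the $-1$-neighbourhoods is an open neighbourhood of $K$; conclusion (iii) follows since Lemma \ref{L1} is designed to preserve, at each step, the identification of the critical set of the Lyapunov function with $\mathcal{R}_X$, and the supports of the modifications avoid $\mathcal{R}_X$.

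The inductive step requires verifying two things before invoking Lemma \ref{L1} at stage $i+1$: first, that $\tau_i$ is still a complete $C^l$-Lyapunov function of the correct regularity (this is explicitly part of the output of Lemma \ref{L1}); and second, that an orbital-derivative bound of the same shape as \eqref{E2} is still available on the next box $\mathcal{W}_{s_{i+1},i+2}$, so that the hypothesis of Lemma \ref{L1} is once again satisfied. Additionally, for the final conclusion I need the $-1$-regions established at stages $j<i$ to survive the modification at stage $i$, i.e.\ the patch on $\mathcal{W}_{s_i,i+1}$ must not disturb the regions where $\dot{\tau}_{j}\equiv -1$ was already established.

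The main obstacle is exactly this cross-iteration compatibility. The margin $\dot{\tau}<-N-3$ in \eqref{E2} is chosen with a buffer of $N+3$ precisely so that each of up to $N$ applications of Lemma \ref{L1}, which can weaken the orbital derivative by only a controlled amount in its interpolation region, still leaves a margin bounded above by $-3$ on the next box---enough for Lemma \ref{L1} to interpolate the orbital derivative down to $-1$ on $\mathcal{V}_{s_{i+1},1}$ without violating strict decrease. The growing time-widths $i+1$ of the $\mathcal{W}_{s_i,i+1}$ are the other structural feature enabling this: they give Lemma \ref{L1} sufficient room along the flow to glue its modification smoothly onto the previous patches without spilling out of the enclosing box. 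The preservation of earlier $-1$-regions rests on the form of the construction in Lemma \ref{L1}, which interpolates along the flow between the old function and a target with orbital derivative $-1$; on any subregion where the old orbital derivative is already $-1$, the interpolation is effectively trivial and leaves the value intact. Granted these bookkeeping ingredients, the induction closes and $\tau_N$ satisfies (i)--(iii).
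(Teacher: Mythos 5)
Your overall strategy --- induct over the boxes, apply Lemma \ref{L1} once per box, and set $\overline{\tau}_K=\tilde\tau_N$ --- is the paper's. But your justification for why the hypothesis of Lemma \ref{L1} survives to stage $k$ is wrong at the one point where something actually has to be checked. You argue that the margin $-N-3$ in \eqref{E2} is a budget that absorbs up to $N$ successive ``controlled weakenings'' of the orbital derivative. Lemma \ref{L1} provides no such quantitative control: outside the region where it forces the orbital derivative to be $-1$ it only guarantees that the new function is still Lyapunov (derivative $<0$), and \emph{on} the $-1$-region the orbital derivative is exactly $-1$, which is far above the threshold $-k-3$ required by \eqref{E1} for the next application. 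So if $\Phi([k,k+1]\times\overline{W_{s_k}})$ met an earlier modification region, hypothesis \eqref{E1} could simply fail, and no additive buffer in \eqref{E2} repairs that.

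The missing step is to prove the disjointness
\[
\Phi([k,k+1]\times\overline{W_{s_k}})\cap\bigcup_{i<k}\mathcal{W}_{s_i,i+1}=\emptyset,
\]
so that $\tilde\tau_{k-1}\equiv\tau$ on $\Phi([k,k+1]\times\overline{W_{s_k}})$ and the bound \eqref{E2} applies verbatim to give \eqref{E1}. This is where the ordering $r_1>\cdots>r_N$ (hence $s_1>\cdots>s_N$) enters: if $\Phi(t_p,p)=\Phi(t_q,q)$ with $p\in\overline{W_{s_k}}$, $t_p\in[k,k+1]$, $q\in W_{s_i}$, $t_q\in(-(i+1),i+1)$ and $i<k$, then $t_p-t_q>0$, so $s_i=\tau(q)=\tau(\Phi(t_p-t_q,p))<\tau(p)=s_k$, contradicting $s_i>s_k$. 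Your proposal never uses the ordering of the $r_i$ and so cannot produce this argument. A smaller gap: the preservation of the earlier $-1$-regions is not because ``the interpolation is effectively trivial'' there --- it is precisely conclusion (ii) of Lemma \ref{L1}, and to invoke it you must feed $M=\bigcup_{i<k}\overline{\mathcal{V}_{s_i,1}}$ into the lemma and verify its hypothesis that $\nabla\tilde\tau_{k-1}\cdot X\equiv-1$ on a neighborhood of $M$, which is exactly the inductive hypothesis; the disjointness above is also what guarantees $\Phi([k,k+1]\times\overline{W_{s_k}})\cap M=\emptyset$, as needed in the third step of the lemma's proof.
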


\begin{proof}[Proof of Theorem \ref{T1}]
From Proposition \ref{P1} we obtain a complete Lyapunov function $\overline{\tau}_K$ which satisfies all claims of Theorem \ref{T1} except
$\nabla\overline{\tau}_K\cdot X|_K\equiv -1$ and not $\nabla\overline{\tau}_K\cdot X|_K\equiv g$ for a given  $g\colon U_K\to (-\infty,0)$.
Choose a closed neighborhood $V_K$ of $U\setminus U_K$
disjoint from $K$. Extend $g$ to a negative $C^l$-function on $U$ with $g|_{V_K}\equiv -1$ and consider the vector field $X_g:=-X/g$.
Choose
a Lyapunov function $\tau_K$ for $X_g$ according to Proposition \ref{P1}. Then we have $\nabla\tau_K\cdot X_g|_K\equiv -1$ which is
equivalent to $\nabla \tau_K\cdot X|_K=\dot{\tau}_K|_K\equiv g$.
\end{proof}

Proposition \ref{P1} follows from the next lemma by repeated application.

\begin{lemma}\label{L1}
Let $\tau\colon U\to\R$ be a complete $C^l$-Lyapunov function and $M\subset U\setminus \mathcal{R}_X$ be closed and assume that
$\nabla\tau\cdot X\equiv -1$ on a neighborhood of $M$.
Let $V_s,W_s\subset \{\tau=s\}\setminus\mR_X$ be relatively open precompact sets with $\overline{V_s}\subset W_s$ and $k\in\N$ such that
\begin{equation}\label{E1}
\dot{\tau}(p)<-k-3
\end{equation}
for all $p\in \Phi([k,k+1]\times \overline{W_s})$.
Then there exists a complete $C^l$-Lyapunov function $\tilde{\tau}\colon U\to\R$ with
\begin{itemize}
\item[(i)] $\tilde{\tau}\equiv \tau$ on $U\setminus \mathcal{W}_{s,k+1}$ and
\item[(ii)] $\nabla\tilde{\tau}\cdot X\equiv -1$ on a neighborhood of $\overline{\mathcal{V}_{s,1}}\cup M$,
\end{itemize}
where $\mathcal{V}_{s,1}:=\Phi((-1,1)\times V_s)$ and $\mathcal{W}_{s,k+1}:=\Phi((-(k+1),k+1)\times W_s)$ are the flow boxes around
$V_s$ and $W_s$.
\end{lemma}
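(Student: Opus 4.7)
My plan is to work in flow-box coordinates on $\mathcal{W}_{s,k+1}=\Phi((-(k+1),k+1)\times W_s)$. Writing $f(t,q):=\tau(\Phi_t(q))$ and $\tilde f(t,q):=\tilde\tau(\Phi_t(q))$, the function $f$ is $C^l$ with $f(0,q)=s$ and $\partial_t f(t,q)=\dot\tau(\Phi_t(q))<0$ throughout the flow box (which sits inside $U\setminus\mR_X$, since $\mR_X$ is $\Phi$-invariant), and the hypothesis \eqref{E1} reads $\partial_tf<-k-3$ on $[k,k+1]\times\overline{W_s}$. I want to construct $\tilde f$ which (a) equals $f$ near the boundary of the flow box (so it glues to $\tau$ off $\mathcal{W}_{s,k+1}$), (b) satisfies $\partial_t\tilde f\equiv-1$ on a neighborhood of $[-1,1]\times\overline{V_s}$, and (c) has $\partial_t\tilde f<0$ throughout.

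My ansatz is a convex combination with two cutoffs. Pick $\zeta\colon\{\tau=s\}\to[0,1]$ smooth with $\zeta\equiv1$ on a neighborhood of $\overline{V_s}$ and $\supp\zeta\subset W_s$ compact, and $\eta\colon\R\to[0,1]$ smooth with $\eta\equiv1$ on $[-k,k]$, $\supp\eta\subset(-(k+1),k+1)$, and a delicate transition on $[k,k+1]$ and $[-(k+1),-k]$. Set
\[
\tilde f(t,q):=\eta(t)\zeta(q)(s-t)+\bigl(1-\eta(t)\zeta(q)\bigr)f(t,q).
\]
Where $\eta\zeta\equiv1$ (including on a neighborhood of $[-1,1]\times\overline{V_s}$), one has $\tilde f=s-t$ so $\partial_t\tilde f\equiv-1$; where $\eta\zeta\equiv0$ (in particular near the boundary of $\mathcal{W}_{s,k+1}$), $\tilde f=f$. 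This gives (i) and (ii) by construction.

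The main obstacle is verifying $\partial_t\tilde f<0$. A direct computation yields
\[
\partial_t\tilde f=(1-\eta\zeta)\partial_t f-\eta\zeta+\eta'(t)\zeta(q)\bigl(s-t-f(t,q)\bigr).
\]
The first two terms sum to something strictly below $-1-(1-\eta\zeta)(k+2)\le-1$ on $[k,k+1]\times\overline{W_s}$ by the hypothesis, but the third (error) term is not manifestly dominated: the factor $s-t-f$ can be positive and as large as $k$ at $t=k$ (since $f(k,q)\le s$ gives $s-k-f(k,q)\le k$). However, $\partial_t(s-t-f)=-1-\partial_tf<-(k+2)$ on $[k,k+1]$, so $s-t-f$ stays positive on a subinterval of length at most $k/(k+2)$ and is uniformly bounded there by $k-(k+2)(t-k)$. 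Writing $\eta(t)=1-\phi(t-k)$ on $[k,k+1]$, the required inequality becomes $\phi'(u)\bigl(k-(k+2)u\bigr)<1+(k+2)\phi(u)$ on $u\in[0,k/(k+2))$, a Gronwall-type ODE whose explicit solution $\phi(u)=u/\bigl(k-(k+2)u\bigr)$ reaches the value $1$ at $u=k/(k+3)<1$; I can therefore take $\phi\equiv1$ on $[k/(k+3),1]$ and mollify to a $C^l$ function flat at both endpoints, yielding an admissible $\eta$. The analogous construction on $[-(k+1),-k]$ is handled symmetrically, and in the iterated application inside Proposition~\ref{P1} the global descent bound \eqref{E2} on the entire flow box supplies the needed estimate on the backward slab.

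Finally, setting $\tilde\tau:=\tilde f\circ\Phi^{-1}$ on $\mathcal{W}_{s,k+1}$ and $\tilde\tau:=\tau$ outside glues to a $C^l$ function on $U$. Because the modification is supported in a compact subset of $U\setminus\mR_X$, $\tilde\tau$ agrees with $\tau$ on an open neighborhood of $\mR_X$; hence $\tilde\tau(\mR_X)=\tau(\mR_X)$ is nowhere dense, $\tilde\tau^{-1}(t)\cap\mR_X=\tau^{-1}(t)\cap\mR_X$ is a chain transitive component for every $t\in\tilde\tau(\mR_X)$, and the critical set equals $\mR_X$---so $\tilde\tau$ is a complete Lyapunov function. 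For (ii) near $M$: outside $\mathcal{W}_{s,k+1}$ we have $\tilde\tau=\tau$, so $\nabla\tilde\tau\cdot X\equiv-1$ on the neighborhood of $M$ given by the hypothesis; inside, on $M\cap\mathcal{W}_{s,k+1}$ the assumption $\partial_tf\equiv-1$ on a neighborhood means both "endpoints" $s-t$ and $f(t,q)$ of the convex combination have orbital derivative $-1$ and the error term $\eta'\zeta(s-t-f)$ vanishes on the orbit segment through such points, so $\partial_t\tilde f\equiv-1$ there as well.
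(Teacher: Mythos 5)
Your single-ansatz interpolation $\tilde f=\eta\zeta\,(s-t)+(1-\eta\zeta)f$ reproduces the paper's treatment of the forward end and of the transverse direction (its third and fourth steps), but it has two genuine gaps, both located on the backward part of the flow box. First, the backward-time transition: on $[-(k+1),-k]$ you have $\eta'\ge 0$, so you need $s-t-f\le 0$ there, i.e.\ $f(t,q)\ge s-t$ for $t<0$, which says that $\tau$ decreases by at least $|t|$ between times $t$ and $0$. Nothing in the hypotheses of Lemma~\ref{L1} gives this --- \eqref{E1} only controls the slab $[k,k+1]$ --- and your fallback to \eqref{E2} both alters the statement of the lemma and fails in the iteration of Proposition~\ref{P1}: at stage $k$ the function fed into the lemma is $\tilde\tau_{k-1}$, which on the previously modified parts of the backward slab satisfies $\dot{\tilde\tau}_{k-1}\equiv-1$, not $<-N-3$. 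The paper circumvents this by anchoring the linear descent at $\tau(-1,q)$ rather than at $s=\tau(0,q)$: it interpolates towards $\tau(-1,q)-(t+3/2)$ on $[-3/2,-5/4]$, where the error term is nonpositive for free because $\tau(t,q)\ge\tau(-1,q)$ and $t+3/2\ge 0$. (Your forward-end analysis also has the signs reversed: $\partial_t(s-t-f)=-1-\partial_tf>k+2$, so $s-t-f$ is \emph{increasing} on $[k,k+1]$ and bounded \emph{below} by $-k$ at $t=k$; the transition must be pushed to the terminal subinterval where $s-t-f>0$, as in the paper's third step, not performed on the initial one as your $\phi$ prescribes.)

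Second, and more seriously, the preservation of $\nabla\tilde\tau\cdot X\equiv-1$ near $M$. Your closing claim that the error term ``vanishes on the orbit segment through such points'' is false: $\eta'(t)\zeta(q)\bigl(s-t-f(t,q)\bigr)$ vanishes at $(t,p)\in M$ only if $f(t,p)=s-t$, a condition on the \emph{value} of $\tau$, not on its orbital derivative; knowing $\partial_tf\equiv-1$ near $(t,p)$ gives no control on $f(t,p)$ itself. The only disjointness available from \eqref{E1} is between $M$ and the forward slab $[k,k+1]\times\overline{W_s}$, so $M$ can perfectly well meet $\supp\eta'\times\supp\zeta$ on the backward side (in the application it does: $M=\bigcup_{i<k}\overline{\mathcal{V}_{s_i,1}}$ lies at higher levels $s_i>s_k$), and there your construction destroys property (ii). Repairing this is exactly the content of the paper's second step, for which your proof has no analogue: one re-anchors the linear descent along the level set of $\tau$ through each point of $M$ in the transition region (the functions $\sigma_p$ built from the implicit parameterizations $\phi_p$, glued by a partition of unity), so that the switch from $\tau$ to the linear function occurs precisely where the two agree to first order. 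Without this step the lemma cannot be iterated and the proof of Proposition~\ref{P1} collapses.
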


\begin{proof}[Proof of Proposition \ref{P1}]
First note that since we assume that $\tau$ is a complete Lyapunov function and the critical values of $\tau$ and $\overline{\tau}_K$ coincide, it
follows trivially that $\overline{\tau}_{K}$ is also a complete Lyapunov function.

The construction of $\overline{\tau}_K$ proceeds by induction over $k=1,\ldots,N$.

For $k=1$ apply Lemma \ref{L1} to $M=\emptyset$ and $\mathcal{V}_{s,1}=\mathcal{V}_{s_1,1}$ and $\mathcal{W}_{s,2}=\mathcal{W}_{s_1,2}$.
Condition \eqref{E1} is satisfied by assumption \eqref{E2}.
This yields a Lyapunov function $\tilde{\tau}_1\colon U\to \R$ with $\tilde{\tau}_1\equiv \tau$ on $U\setminus \mathcal{W}_{s_1,2}$ and
$\nabla\tilde{\tau}_1\cdot X\equiv -1$ on a neighborhood of $\overline{\mathcal{V}_{s_1,1}}$.

Now let $k\ge 2$ and assume that a Lyapunov function $\tilde{\tau}_{k-1}\colon U\to \R$ with
$$\tilde{\tau}_{k-1}\equiv \tau\text{ on }U\setminus \bigcup_{i< k} \mathcal{W}_{s_i,i+1}$$
and
$$\nabla\tilde{\tau}_{k-1}\cdot X\equiv -1\text{ on a neighborhood of }\bigcup_{i< k}\overline{\mathcal{V}_{s_i,1}}$$
has been constructed.

Set $M=\bigcup_{i< k}\overline{\mathcal{V}_{s_i,1}}$.
We show that
\[
\Phi([k,k+1]\times \overline{W_{s_k}})\cap \bigcup_{i< k} \mathcal{W}_{s_i,i+1}=\emptyset.
\]
This especially implies $ \Phi([k,k+1]\times \overline{W_{s_k}})\cap M =\emptyset$.
Assume on the contrary that for a $p\in \overline{W_{s_k}}$ and $t_p\in [k,k+1]$
 there exist for an $i<k$ a  $q\in W_{s_i}$ and $t_q\in (-(i+1),i+1)$ such  that $\Phi(t_q,q)=\Phi(t_p,p)$.  Then we have $q=\Phi(t_p-t_q,p)$ with $t_p-t_q >0$ and
it follows that
$$
s_k < s_i=\tau(q)=\tau(\Phi(t_p-t_q,p))<\tau(p)=s_k,
$$
a contradiction.

Thus we have $\tilde{\tau}_{k-1}\equiv\tau$ on $\Phi([k,k+1]\times \overline{W_{s_k}})$. With the assumption of \eqref{E2} we conclude that
Condition \eqref{E1} is satisfied.

Now Lemma \ref{L1} with $s=s_k$ yields a complete Lyapunov function $\tilde{\tau}_k\colon U\to\R$ with $\tilde{\tau}_k\equiv \tilde{\tau}_{k-1}$ on $U\setminus
\mathcal{W}_{s_k,k+1}$, i.e. $\tilde{\tau}_k\equiv \tau$ on $U\setminus \cup_{i\le k} \mathcal{W}_{s_i,i+1}$ and
\[
\nabla\tilde{\tau}_{k}\cdot X\equiv -1\text{ on a neighborhood of }\bigcup_{i\le k}\overline{\mathcal{V}_{s_i,1}}.
\]
This finishes the induction. Setting $\overline{\tau}_K=\tilde{\tau}_N$ completes the proof.
\end{proof}

\begin{proof}[Proof of Lemma \ref{L1}]

\underline{\it Prelude:} Recall that $\mR_X$ is the set of critical points of the Lyapunov function $\tau$ and $\overline{\mathcal{W}_{s,k+1}}\subset U\setminus \mR_X$. Therefore the restriction of the flow
\[
\Phi\colon [-(k+1),k+1]\times \overline{W_s}\to \overline{\mathcal{W}_{s,k+1}}
\]
is a $C^l$-diffeomorphism. Note that since $\Phi$ is the flow of $X$ we have $X(p)=D\Phi(t,q)e_1$ for $p=\Phi(t,q)$, 
where $e_1$ is the direction of the $\R$-factor of $\R\times \overline{W_s}$. We can thus equivalently consider the Lyapunov function $\tau\circ \Phi$ for the
constant vector field $e_1$ on $[-(k+1),k+1]\times \overline{W_s}$, because
$$
\dot\tau(p)=\nabla \tau(p)\cdot X(p) = \nabla\tau(p)\cdot D\Phi(t,q)e_1=\nabla (\tau\circ \Phi)(t,q)\cdot e_1.
$$
Further, we can equivalently write the orbital derivative  $\nabla (\tau\circ \Phi)\cdot e_1$ as $\partial_{e_1}(\tau\circ \Phi)$.
Thus, we have established that it suffices to construct the function $\tilde\tau$ on $[-(k+1),k+1]\times \overline{W_s}$ such that it coincides with
$$\tau\circ \Phi\colon [-(k+1),k+1]\times \overline{W_s}\to \R$$
on a neighborhood of the relative boundary

$$
\{-(k+1)\}\times W_s\cup\{k+1\}\times W_s\cup [-(k+1),k+1]\times \partial W_s,$$
where $\partial W_s:= \overline{W_s}\setminus W_s$.
We will drop the notation $\tau\circ \Phi$ for simply $\tau$ in the following and only consider the constant vector field $e_1$.
With the same argument we replace $M$ by the preimage of $M$ under $\Phi$.  Note that we do not change $\tau$ near the boundary of $\overline{\mathcal{W}_{s,k+1}}$ and therefore do not have to consider the complement of $\overline{\mathcal{W}_{s,k+1}}$.

The proof proceeds in several steps. In the first step we construct $\tilde{\tau}$ in a neighborhood of $[-1,1]\times\overline{V_s}$.
The second step then carefully interpolates $\tilde\tau$ with $\tau$ near $\{-(k+1)\}\times W_s$ in order not to destroy the property that
$\partial_{e_1}\tau\equiv -1$ on a neighborhood of $M$. The third step then takes care of the interpolation near $\{k+1\}\times W_s$.
Finally the fourth step interpolates $\tilde{\tau}$ with $\tau$ near $[-(k+1),k+1]\times \partial W_s$ again carefully in order not to destroy
the property that $\partial_{e_1}\tau\equiv -1$ on a neighborhood of $M\cup [-1,1]\times \overline{V_s}$.

\begin{figure}[ht]
	\centering
 \includegraphics[width=\textwidth]{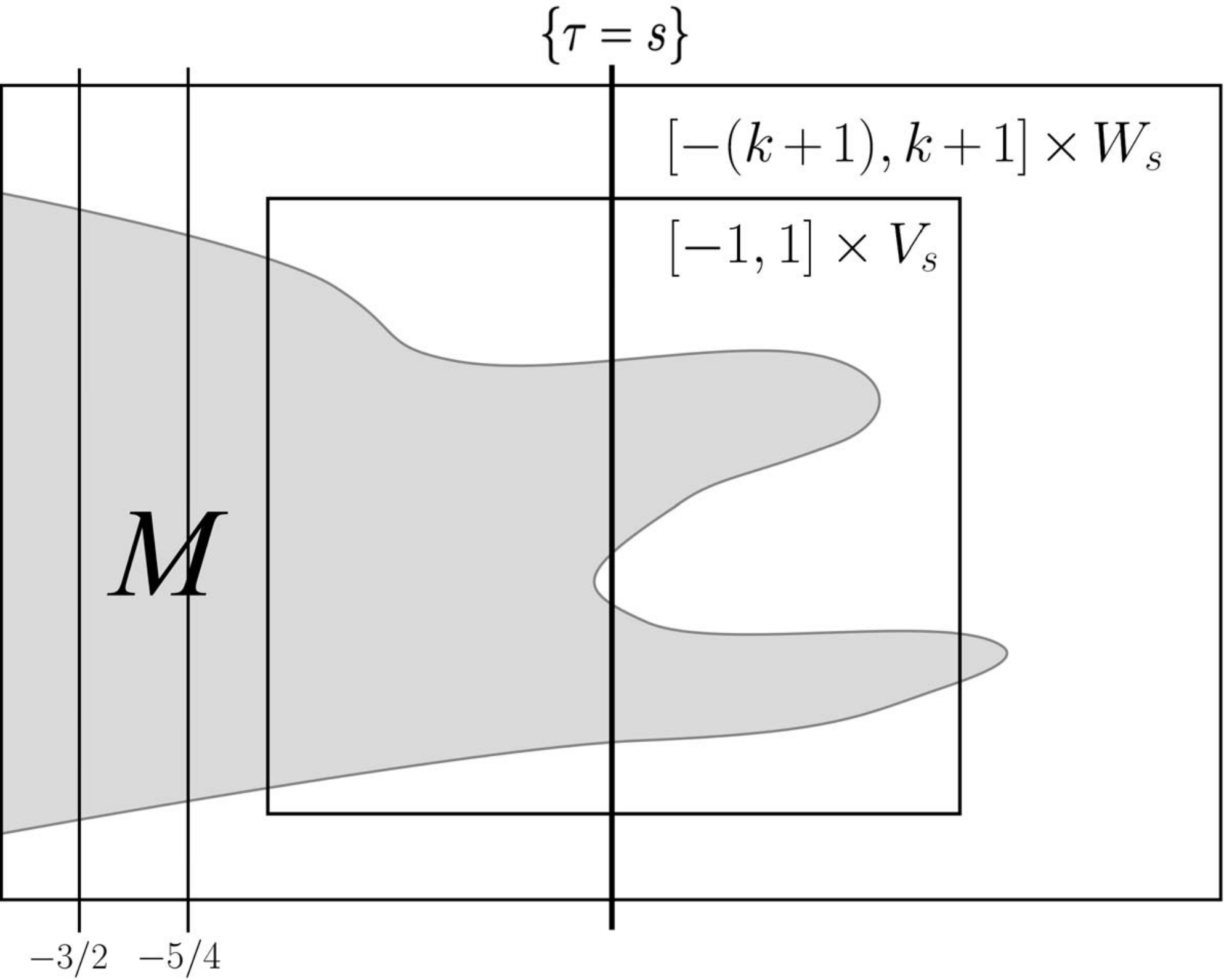}
	\caption{The first step.}
	\label{fig1}
\end{figure}

\underline{\it 1$^{st}$ step:}

In this first step we will construct a Lyapunov function $\tau_1$ on $[-(k+1),k+1]\times W_s$ with $\partial_{e_1} \tau_1 \equiv -1$ on
$[-1,1]\times \overline{V_s}$, see Figure \ref{fig1}.

Choose a smooth monotone function $\mu_-\colon \R\to[0,1]$ with:
\begin{itemize}
\item[(1)] $\mu_-\equiv 0$ for $t\le -3/2$ and
\item[(2)] $\mu_-\equiv 1$ for $t\ge -5/4$.
\end{itemize}
Define $\tau_1\colon [-(k+1),k+1]\times W_s\to \R$ by
$$\tau_1(t,q):=(1-\mu_-(t))\tau(t,q)+\mu_-(t)\left[\tau(-1,q)-\left(t+\frac{3}{2}\right)\right].$$
It is easy to see that $\tau_1$ is $C^l$-regular with $\partial_{e_1}\tau_1<0$ on $[-(k+1),k+1]\times \overline{W_s}$
and $\partial_{e_1} \tau \equiv -1$ on $[-5/4,k+1]\times \overline{W_s}$.
Indeed we have
\begin{align*}
\partial_{e_1} \tau_1=&(1-\mu_-(t))\partial_{e_1} \tau(t,q)-\mu_-(t)\\
&+ \mu'_-(t)\left[\tau(-1,q)-\left(t+\frac{3}{2}\right)-\tau(t,q)\right].
\end{align*}
The term
$$(1-\mu_-(t))\partial_{e_1} \tau(t,q)-\mu_-(t)$$
is everywhere negative, since $\tau$ is a Lyapunov function
for $e_1$ and constant to $-1$ for $t\ge -5/4$. The term
\[
\tau(-1,q)-\tau(t,q)-\left(t+\frac{3}{2}\right)
\]
is negative for $-3/2\le t \le -5/4$ because $\tau(t,q)\ge \tau(-1,q)$. Since $\mu'_-\ge 0$ and $\supp \mu'_-\subset [-3/2,-5/4]$ we conclude that  $\tau_1$ is Lyapunov. Note that $\tau_1\equiv \tau$ on $[-(k+1),-3/2]\times \overline{W_s}$.

\underline{\it 2$^{nd}$ step:} In this step, we construct a function $\tau_2$ from $\tau_1$, such that $\partial_{e_1}\tau_2 \equiv -1$ on an appropriate set involving $M$ and $[-1,1]\times \overline{V_s}$, see the later claim in this step and Figure \ref{fig2}.
\begin{figure}[ht]
	\centering
 \includegraphics[width=\textwidth]{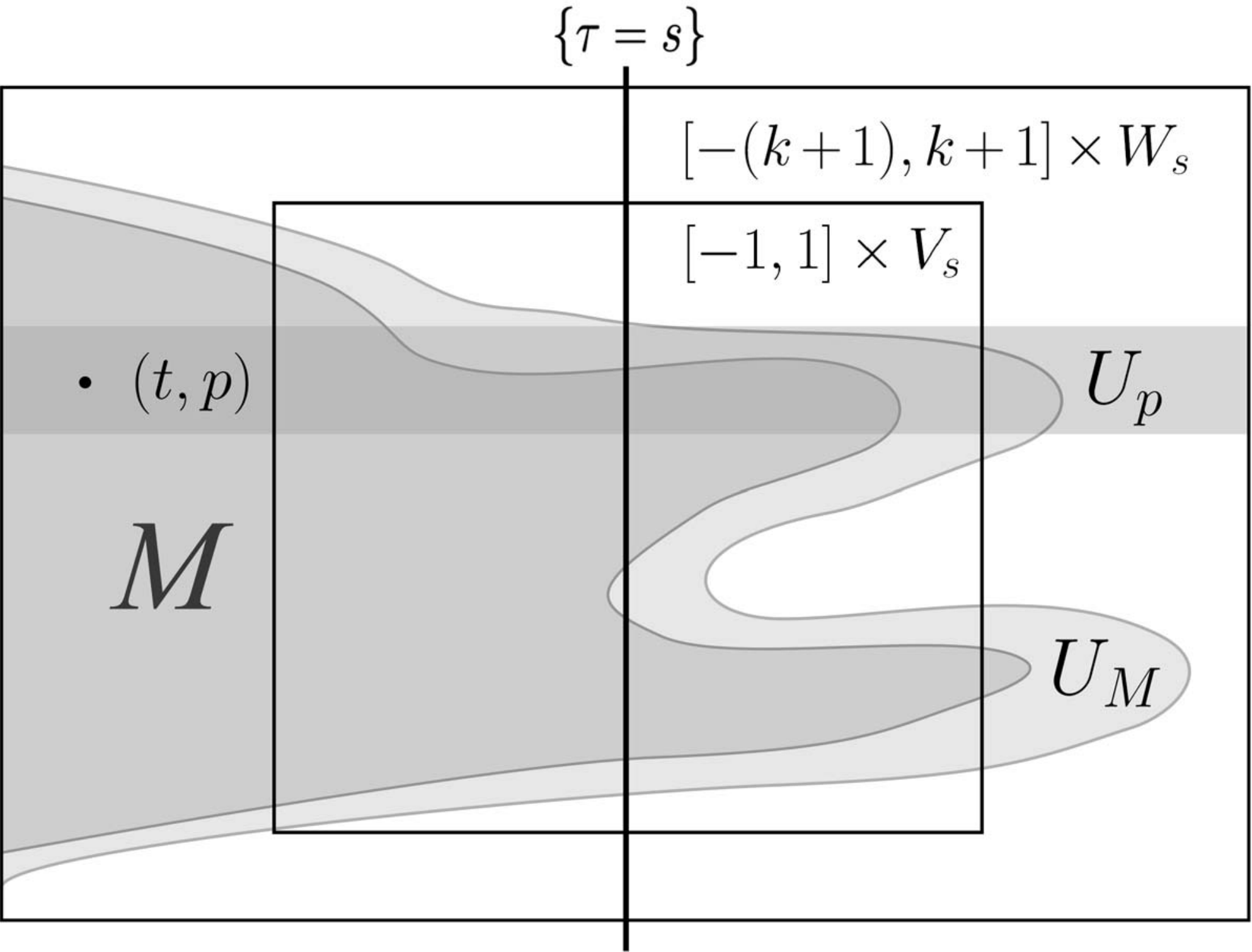}
	\caption{The second step. }
	\label{fig2}
\end{figure}

Fix a neighborhood $U_M\subset [-(k+1),k+1]\times \overline{W_s}$ of $M$ (in the relative topology), such that
$\partial_{e_1}\tau|_{U_M}\equiv -1$.
For $(t,p)\in M\cap [-3/2,-5/4]\times \overline{W_s}$ consider the level set
$$\{\tau=\tau(t,p)\}\cap [-7/4,-1]\times \overline{W_s}.$$
Choose a neighborhood $U_{p}$ of $p$ in $\overline{W_s}$ such that there exists an open interval $I_p$ containing $\tau(t,p)$ and
\[
\{\tau\in I_p\}\cap [-7/4,-1]\times \overline{U_p}\subset U_M.
\]

Note that $\partial_{e_1}\tau\equiv -1$ on $\{\tau\in I_p\}\cap [-7/4,-1]\times \overline{U_p}$ according to the choice of $U_M$.
By the Implicit Function Theorem there exists a $C^l$-function
$$\phi_p\colon U_p\to [-7/4,-1]$$
with $\tau(u,q)=\tau(t,p)$ iff $u=\phi_p(q)$, i.e. a parameterization of a part of the level set $\{\tau=\tau(t,p)\}$ (if necessary, shrink $U_p$).
Note that we can assume (possibly after further shrinking $U_p$) that
\begin{equation}\label{E4}
\tau(u,q)=\tau(\phi_p(q),q)-u+\phi_p(q)
\end{equation}
in a neighborhood of $\{(\phi_p(q),q)|\; q\in U_p\}$ since the points $(\phi_p(q),q)$ belong to $U_M$  and $\partial_{e_1}\tau\equiv -1$ on $U_M$.
Define a function
\begin{align*}
\sigma_p\colon [-(k+1),k+1]\times U_p&\to \R,\\
\sigma_p(u,q)&:=\begin{cases}
\tau(u,q),&\text{ for }u\le \phi_p(q)\\
\tau(\phi_p(q),q)-u+\phi_p(q)&\text{ for }u\ge \phi_p(q).
\end{cases}
\end{align*}
The function $\sigma_p$ is $C^l$-regular by \eqref{E4}. Further we have
$\partial_{e_1} \sigma_p<0$ everywhere with $\equiv -1$ on $\{(u,q)|\; u\ge \phi_p(q)\}\subset [-(k+1),k+1]\times U_p$.

We  select a finite subcover $\{U_j:=U_{p_j}\}_{j=1,\ldots, R}$ of the compact set
$$M_s:=\{p\in \overline{W_s}|\; \exists t\in [-7/4,-5/4]: (t,p)\in M\}.$$
Let $\{\lambda_j\}_j$ be a smooth partition of unity subordinate to $\{U_j\}_j$. Then
\begin{align*}
\sigma\colon [-(k+1),k+1]\times \bigcup_{1\le j \le R}  U_j&\to \R, \\
\sigma(t,q)&:=\sum_j \lambda_j(q) \sigma_{p_j}(t,q)
\end{align*}
is a $C^l$-function with $\partial_{e_1} \sigma \equiv -1$ on $[-1,k+1]\times(\cup_j  U_j)$. Note that $\sigma\equiv \tau$ on $[-(k+1),-7/4]\times (\cup_j U_j)$.

Let $\mathbb{U}_1$ be a neighborhood of $M_s$ (in the relative topology of $\overline{W_s}$) with closure in $\cup_j  U_j$ and let $\nu_1\colon \overline{W_s}\to [0,1]$ be smooth with
$\nu_1|_{\mathbb{U}_1}\equiv 1$ and $\supp\nu_1\subset \cup_j  U_j$.

Now the function
\begin{align*}
\tau_2\colon [-(k+1),k+1]\times \overline{W_s}&\to \R,\\
\tau_2(t,q)&:= \nu_1(q) \sigma(t,q) +[1-\nu_1(q)]\tau_1(t,q)
\end{align*}
is $C^l$ and Lyapunov for $e_1$.

{\it Claim:} We claim that $\partial_{e_1} \tau_2\equiv -1$ on
a neighborhood of
$$M\cup [-1,1]\times \overline{V_s}.$$

{\it Proof of the claim:}
We have $\partial_{e_1} \sigma \equiv -1$ on a neighborhood of $[-1,k+1]\times \supp \nu_1$ and $\partial_{e_1} \tau_1\equiv -1$ on $[-5/4,k+1]\times
\overline{W_s}$. Therefore
$$
\partial_{e_1} \tau_2 (t,q) = \nu_1(q) \partial_{e_1} \sigma(t,q)+(1-  \nu_1(q))\partial_{e_1} \tau_1(t,q)\equiv -1
$$
and the claim is obvious on a neighborhood of $[-1,k+1]\times \overline{W_s}$.  In particular we obtain $\partial_{e_1}
\tau_2\equiv -1$
 on a neighborhood of
$(M\cap [-1,k+1]\times \overline{W_s})\cup [-1,1]\times \overline{V_s} \subset [-1,k+1]\times \overline{W_s}$.

It only remains to consider the set $M\cap [-(k+1),-1)\times \overline{W_s}$ because $M \subset  [-(k+1),k+1]\times \overline{W_s}$.

For $(t,p)\in M$ with $t\in (-5/4,-1)$  there are two cases: If $p\notin \supp \nu_1$ we have $\tau_2\equiv \tau_1$ in a neighborhood of $(t,p)$. It follows
that $\partial_{e_1} \tau_2 \equiv \partial_{e_1} \tau_1\equiv -1$ in a neighborhood of $(t,p)$. If $p\in \supp \nu_1$ note that $\partial_{e_1} \sigma_{p_j}
\equiv -1$ in a neighborhood of $(t,p)$ for all $1\le j\le N$ such that $p\in U_j$. This implies $\partial_{e_1}\sigma\equiv -1$ near $(t,p)$.
Since $\partial_{e_1}\tau_1\equiv -1$ in a neighborhood
of $(t,p)$ we obtain again $\partial_{e_1} \tau_2 \equiv -1$ in a neighborhood of $(t,p)$.

For $(t,p)\in M$ with $t\in [-3/2,-5/4]$ we have $\nu_1\equiv 1$ near $p$. As in the previous case we have $\partial_{e_1}\sigma\equiv -1$
in a neighborhood of $(t,p)$, i.e. $\partial_{e_1}\tau_2\equiv -1$ near $(t,p)$.

Finally for $(t,p)\in M$ with $t\in [-(k+1),-3/2)$ we again distinguish two cases: First assume $p\notin \supp \nu_1$. Then $\tau_2\equiv \tau_1\equiv
\tau$ in a neighborhood of $(t,p)$. Since $(t,p)\in M$ we conclude $\partial_{e_1}\tau_2\equiv -1$ near $(t,p)$.  Now assume
$p\in\supp \nu_1$. For $1\le i\le N$ such that $\phi_i(p)$ is defined, i.e.~$p\in U_{p_i}$, and $t<\phi_i(p)$ we have $\sigma_{p_i}\equiv \tau$ near $(t,p)\in M$, i.e. $\partial_{e_1}
\sigma_{p_i}\equiv -1$ near $(t,p)$.
For $1\le i\le N$ such that $\phi_i(p)$ is defined and $t\ge \phi_i(p)$ we have $\partial_{e_1}\sigma_{p_i}\equiv -1$ in a neighborhood of $(t,p)$ trivially by construction.
Since $\tau_1\equiv \tau$ near $(t,p)$ and $(t,p)\in M$ we also have $\partial_{e_1}\tau_1\equiv -1$ near $(t,p)$.
Summing up we conclude $\partial_{e_1}\tau_2\equiv -1$ in a neighborhood of $(t,p)$.

This concludes the proof of the claim.

\underline{\it 3$^{rd}$ step:}
Next we modify $\tau_2$ on $[k,k+1]\times \overline{W_s}$ so that it coincides with $\tau$ near $\{k+1\}\times \overline{W_s}$, see Figure \ref{fig3}.
\begin{figure}[ht]
	\centering
 \includegraphics[width=\textwidth]{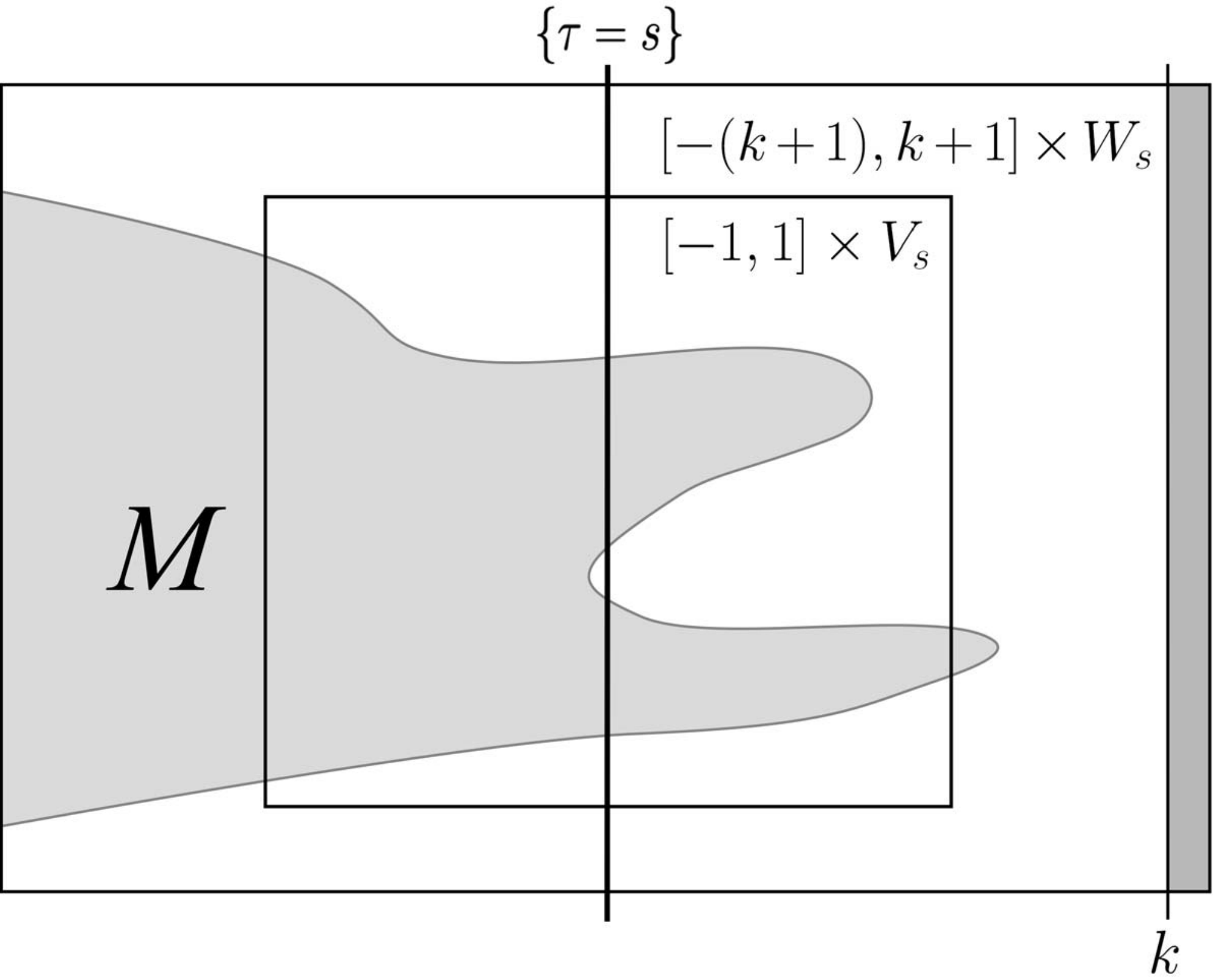}
	\caption{The third step }
	\label{fig3}
\end{figure}
We start with estimating $\tau_2(k+1,q)$ from below. Note that by construction
\[
\tau_1(k+1,q)=\tau(-1,q)-k-5/2
\]
and for   $q\in U_j$ we have
\begin{align*}
\sigma_{p_j}(k+1,q)&=\tau(\phi_{p_j}(q),q)-(k+1)+\phi_{p_j}(q)\\
&\ge \tau(-1,q)-(k+1)-7/4,
\end{align*}
because $\phi_{p_j}(q)\in [-7/4,-1]$.
Combining both, the definition of $\tau_2$ implies
$$\tau_2(k+1,q)\ge \tau(-1,q)-k-11/4$$
for all $q\in \overline{W_s}$. 
By \eqref{E1} we have
$$\tau(k+1,q)\le \tau(k,q)-k-3\le \tau(-1,q)-k-3$$
and therefore there exists $\e \in (0,1/2)$ such that $\tau<\tau_2$ on $[k+1-2\e,k+1]\times \overline{W_s}$. Choose a smooth monotone function $\mu_+\colon
\R\to[0,1]$ with:
\begin{itemize}
\item[(1)] $\mu_+\equiv 0$ for $t\le k+1-2\e$ and
\item[(2)] $\mu_+\equiv 1$ for $t\ge k+1-\e$
\end{itemize}
Define $\tau_3\colon [-(k+1),k+1]\times \overline{W_s}\to \R$ by
$$\tau_3(s,q):=(1-\mu_+(s))\tau_2(s,q)+\mu_+(s)\tau(s,q).$$
As before we see that $\tau_3$ is Lyapunov for $e_1$, using $\tau<\tau_2$ on the support of the derivative of $\mu_+$. Note that by assumption \eqref{E1} the sets
$M$ and $[k,k+1]\times \overline{W_s}$ are disjoint. Since $\tau_3\equiv \tau_2$ on $[-(k+1),k+1-2\epsilon]\times \overline{W_s}$ and $k<k+1-2\epsilon$
we continue to have $\partial_{e_1} \tau_3\equiv -1$ on a neighborhood of
\[
M\cup [-1,1]\times \overline{V_s}.
\]
Moreover, $\tau\equiv \tau_3$ near $\{-(k+1),k+1\}\times \overline{W_s}$.

\underline{\it 4$^{th}$ step:}
It remains to interpolate $\tau_3$ with $\tau$ near $[-(k+1),k+1]\times \partial W_s$, see Figure \ref{fig4}.
\begin{figure}[ht]
	\centering
  \includegraphics[width=\textwidth]{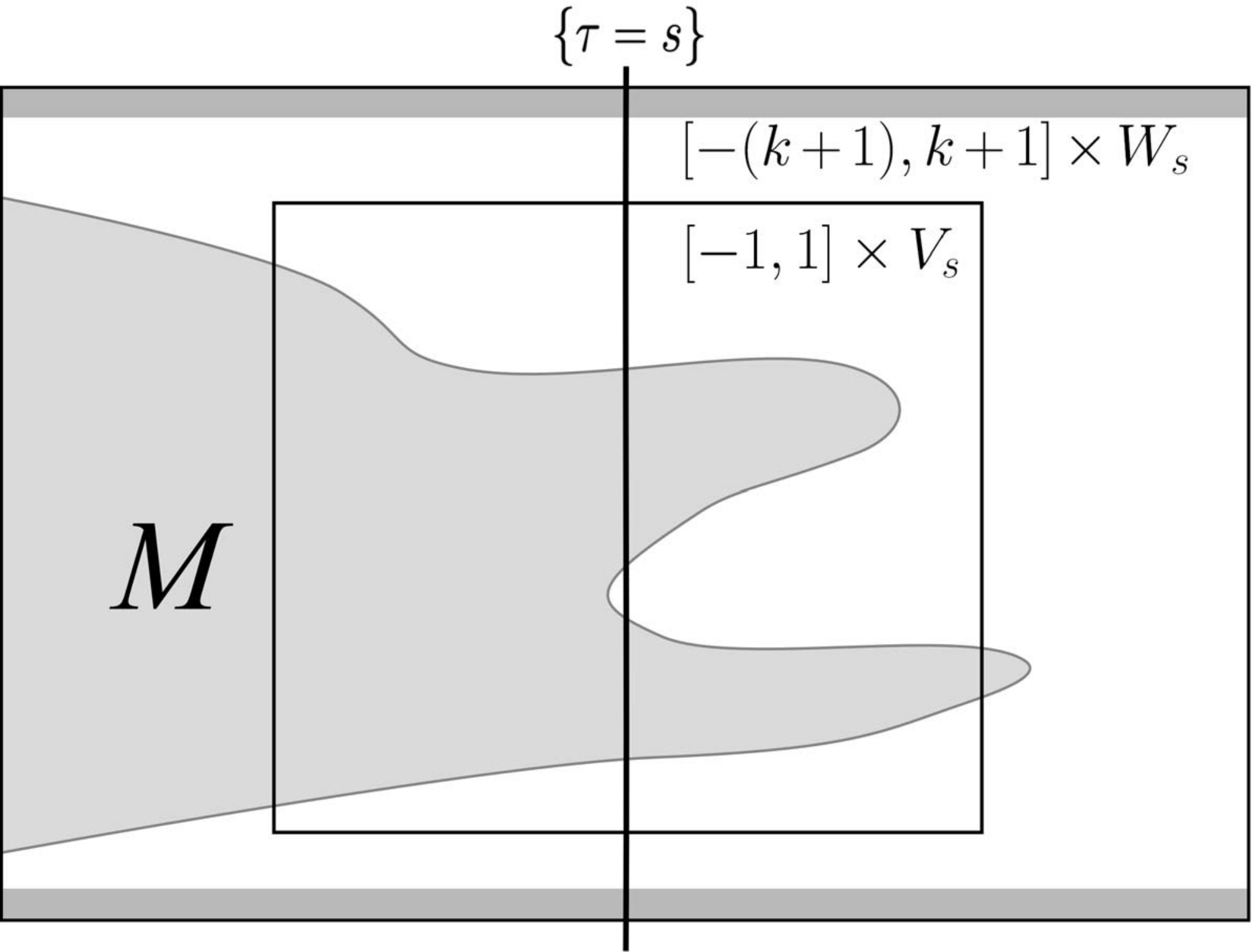}
	\caption{The fourth step }
	\label{fig4}
\end{figure}
Choose a neighborhood $\mathbb{U}_2$ of $\overline{V_s}$ with closure in $W_s$ and a smooth function
$\nu_2\colon \overline{W_s}\to [0,1]$ with $\nu_2\equiv 1$ on $\mathbb{U}_2$ and $\supp\nu_2\subset W_s$.
Then
\begin{align*}
\tau_4\colon [-(k+1),k+1]\times \overline{W_s}&\to \R,\\
\tau_4(s,q)&:=\nu_2(q)\tau_3(s,q)+(1-\nu_2(q))\tau(s,q)
\end{align*}
is a $C^l$-function which coincides with $\tau$ near the boundary of $[-(k+1),k+1]\times \overline{W_s}$ and
$\partial_{e_1}\tau_4\equiv -1$ on a neighborhood of $[-1,1]\times \overline{V_{s}}\cup M$.
Indeed the property holds for  $\tau_3$, and for $\tau$ outside of $[-1,1]\times\overline{V_s}$. Since $\nu_2|_{[-1,1]\times \overline{V_s}}\equiv 1$
the claim follows immediately. Setting $\tilde{\tau}:=\tau_4$ concludes the proof.
\end{proof}

\section{Conclusions}
\label{sec:conclusions}
We consider a dynamical system, given by the flow of a $C^l$-vector field $X\colon U \to\R^n$. For any compact subset $K$ of the complement of the chain recurrent set $U\setminus \mR_X$ and any $C^l$-function $g\colon U\to (-\infty,0)$, we have established the existence of a $C^l$-regular complete
Lyapunov function $\tau$ for the system that fulfills $\dot{\tau}(p)=\nabla \tau(p)\cdot X(p) = g(p)$ for every $p\in K$.   These results are of essential importance for methods to numerically compute complete Lyapunov functions.  Indeed, they present a major leap forward in analyzing and improving
several methods that rely on solving PDEs or convex optimization problems containing equality constraints.


\bibliographystyle{amsplain}
\bibliography{LyapBibUniform}

\end{document}